\documentclass[reqno]{amsart}

\usepackage[T1]{fontenc}
\usepackage{graphicx}
\usepackage{enumitem}
\usepackage{amsmath,amsfonts,amssymb,amsthm,amscd,latexsym,cite}
\usepackage{mathrsfs}
\usepackage{xcolor}
\usepackage{hyperref}

\hypersetup{
  colorlinks=true,
  linkcolor=blue,
  citecolor=blue,
  urlcolor=blue
}

\newtheorem{thm}{Theorem}[section]
\newtheorem{theorem}[thm]{Theorem}

\newtheorem{lemma}[thm]{Lemma}

\newtheorem{remark}[thm]{Remark}

\newtheorem{problem}{Problem}

\newcommand{\cM}{{\mathcal M}}
\newcommand{\cN}{{\mathcal N}}

\newcommand{\cR}{{\mathcal R}}

\title
{Tingley's Problem for Haagerup Noncommutative $L_p$-Spaces, $1<p\ne 2<\infty$}

\author[Jinghao Huang]{J. Huang}
\thanks{Y. Zhu is the corresponding author. J. Huang was supported by the
  NNSF of China (Nos. 12031004, 12301160, 12471134 and 12671159).}
\address{Institute for Advanced Study in Mathematics of HIT, Harbin Institute
  of Technology, Harbin 150001, China}
\email{jinghao.huang@hit.edu.cn}

\author[Yunpeng Zhu]{Y. Zhu}
\address{Institute for Advanced Study in Mathematics of HIT, Harbin Institute
  of Technology, Harbin 150001, China}
\email{24s012030@stu.hit.edu.cn}

\begin{document}

\begin{abstract}
We solve Tingley's problem for Haagerup
noncommutative $L_p$-spaces, where $1<p\ne 2<\infty$, which gives an affirmative answer to Mori's problem \cite[Problem~6.3]{Mori}.
\end{abstract}

\maketitle

\section{Introduction}

\subsection{Background}

For a normed space $X$, write $S(X)=\{x\in X:\|x\|=1\}$. In 1987,
Tingley posed the following problem \cite{Tingley}.

\begin{problem}
Let $X$ and $Y$ be real normed spaces, and let
\(V_0:S(X)\to S(Y)\) be a surjective isometry. Does $V_0$ extend to a
surjective real-linear isometry from $X$ onto $Y$?
\end{problem}

Although the problem remains open in full generality, it has affirmative
solutions for many important classes, including commutative $\ell_p$-type and
$L_p$-spaces \cite{Ding2,Ding3,Ding2009,Tan1,Tan2,Tan3}, operator algebras and
operator spaces \cite{Tanaka,Polo3,Polo18,MO20,Mori}, the trace class
\cite{Polo2}, and the Schatten $p$-classes, for
$1<p<\infty$, $p\ne2$, in \cite{Polo}, and subsequently for all
$0<p<\infty$, $p\ne2$, in \cite{HuangZhu}.

\subsection{Tingley's problem for noncommutative
\texorpdfstring{$L_p$}{Lp}-spaces}

For a von Neumann algebra $\cM$, the Haagerup space $L_p(\cM)$ is the
noncommutative counterpart of a classical $L_p$-space. In 2018, Mori
formulated the following problem.

\begin{problem}
\cite[Problem~6.3]{Mori}\label{prob:Mori-noncommutative-Lp}
Let $1<p<\infty$, $p\ne2$, let $\cM$ and $\cN$ be von Neumann algebras,
and let
\[
  V_0:S(L_p(\cM))\longrightarrow S(L_p(\cN))
\]
be a surjective isometry between the unit spheres of their Haagerup
noncommutative $L_p$-spaces.
Does $V_0$ extend to a surjective real-linear isometry
from $L_p(\cM)$ onto $L_p(\cN)$?
\end{problem}

Mori settled the corresponding $L_1$-problem \cite{Mori}, and \cite{Polo}
settled the Schatten case. This paper gives a complete
affirmative answer to Problem \ref{prob:Mori-noncommutative-Lp} for arbitrary
von Neumann algebras.

\begin{theorem}\label{thm:main-result}
Let $\cM_1$ and $\cM_2$ be von Neumann algebras, and let
\(
  V_0:S(L_p(\cM_1))\longrightarrow S(L_p(\cM_2))
\)
be a surjective isometry, where $1<p\ne 2<\infty$. Then there exists a surjective real-linear isometry that extends \(V_0\). 
\end{theorem}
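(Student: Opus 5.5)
The plan is to follow the general strategy used for Schatten classes in \cite{Polo} and \cite{HuangZhu}, transplanted to the Haagerup setting. First, I will describe the facial structure of the unit ball of $L_p(\cM)$. Since $1<p<\infty$, the space $L_p(\cM)$ is uniformly convex and uniformly smooth, so faces are useless. The substitute is a metric characterisation of \emph{orthogonality} (disjointness): $x,y\in S(L_p(\cM))$ have orthogonal left and right supports if and only if
\[
\|x\pm y\|^p = 2
\]
and, more generally, $\|\alpha x+\beta y\|^p=|\alpha|^p+|\beta|^p$ for all scalars. The equality case of Clarkson--McCarthy type inequalities (valid in Haagerup $L_p$ for $p\ne2$, e.g.\ via Raynaud--Xu/Kosaki) should show that the two-point condition $\|x+y\|^p=\|x-y\|^p=2$ already forces disjointness. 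Because this condition is expressed by distances between points of the unit sphere, $V_0$ will preserve disjointness in both directions.

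Second, I will show that $V_0$ is additive on disjoint elements, and that it respects the positive homogeneity coming from the sphere. For disjoint $x,y$ with $\|x\|^p+\|y\|^p=1$, the element $x+y$ should be characterised metrically, for instance as the unique point of the sphere at prescribed distances from the normalisations $x/\|x\|$, $y/\|y\|$ and their negatives, using strict convexity. The consequence would be
\[
V_0(x+y)=\|x\|\,V_0\!\left(\tfrac{x}{\|x\|}\right)+\|y\|\,V_0\!\left(\tfrac{y}{\|y\|}\right).
\]
Together with $V_0(-x)=-V_0(x)$ (the antipodal property, via Mankiewicz/Tingley-type arguments), this lets me define $V(x)=\|x\|V_0(x/\|x\|)$ and show that $V$ is additive on disjointly supported elements.

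Third comes the global step. Elements of the form $\sum \lambda_i\, u_i h_i$, with spectral pieces relative to a fixed partial isometry $u$ (polar decomposition $x=u|x|$, with $|x|$ decomposed spectrally into disjoint pieces), are dense. On such "$u$-sectors" $V$ is additive and positively homogeneous, hence real-linear and isometric on the closed real subspace. One would then need to glue these sectors together. I expect the cleanest route is to avoid the gluing altogether by invoking a structure theorem. From disjointness preservation one derives that $V$ induces an orthogonality-preserving bijection between projections, or between the "support lattices", of $\cM_1$ and $\cM_2$. By Dye's theorem (or a Jordan $*$-isomorphism argument, after splitting off a type $I_2$ summand) this yields a Jordan $*$-isomorphism $J:\cM_1\to\cM_2$, possibly combined with conjugate-linearity on a central summand. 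A Sherman/Yeadon/Watanabe-type description of surjective isometries of Haagerup $L_p$, transported by $J$ via the Connes cocycle, then provides a surjective real-linear isometry $T$ that agrees with $V$ on a generating set of disjoint elements. Finally, $V_0=T|_{S}$ follows by density and continuity.

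I expect the main obstacle to be the equality case in the first step for arbitrary (type III) $\cM$. There is no trace, so the standard tracial arguments must be replaced. My first attempt would be to reduce via Haagerup's reduction theorem to the semifinite or finite case, $\cM\subset\cR=\cM\rtimes_\sigma \mathbb{Z}$-dyadic, where $L_p(\cM)$ is complemented by conditional expectation and $L_p(\cR)$ is an increasing union of tracial $L_p$'s. Alternatively, I would use Kosaki's interpolation description, where the relevant uniform-convexity equality case can be checked directly on $L_p(\cM)$ through $|x|^p\in L_1(\cM)=\cM_*$.
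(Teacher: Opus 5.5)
Steps~1 and~2 of your proposal follow the paper's route, but Step~3, where the real difficulty lies, has a genuine gap.

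\emph{(i) No lattice map is actually produced.} Disjointness preservation controls left and right supports separately. For non-central $e$, the left and right supports of $V(eL_p(\cM_1)e)$ need not agree (think of $x\mapsto ux$). So there is no evident single orthoisomorphism of projection lattices to feed into Dye's theorem. Dye also excludes type~$\mathrm{I}_2$ summands, which you split off but never handle; already $M_2(\mathbb C)$ is an issue.

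\emph{(ii) $J$ does not determine $T$.} Even granting a Jordan $*$-isomorphism $J$, the isometry $T$ built from it carries extra data: a unitary factor, and a central splitting into complex-linear and conjugate-linear parts. These must be read off from $V$ before any agreement can be checked.

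\emph{(iii) Agreement does not propagate.} $V$ is only known to be additive on \emph{disjoint} families. So agreement of $V$ with $T$ on a generating set propagates only to finite disjoint sums, and you need a family whose disjoint sums are dense. Your spectral family is not available in the Haagerup setting. Because $\theta_s(|x|)=e^{-s/p}|x|$, the spectral projections of $|x|$ lie in the crossed product $\cR$, not in $\cM$. Any orthogonal splitting of $x$ in $L_p(\cM)$ needs projections of $\cM$ commuting with $|x|$, i.e.\ in the centraliser of $|x|^p\in\cM_*$. That centraliser can be trivial in type~III, and then $x$ has no nontrivial orthogonal decomposition at all.

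The missing idea is that no structure theory is needed. One proves directly that the radial extension $\widetilde V$ preserves distances, then applies Mazur--Ulam.
\begin{itemize}
\item Suppose a unit $z$ is orthogonal to both $x$ and $y$. Lift them to the sphere points $(x+c_xz)/R$ and $(y+c_yz)/R$, with $c_x=(R^p-\|x\|_p^p)^{1/p}$ and $c_y=(R^p-\|y\|_p^p)^{1/p}$.
\item Orthogonal additivity gives $RV_0\bigl((x+c_xz)/R\bigr)=\widetilde V(x)+c_xV_0(z)$, and likewise for $y$.
\item Orthogonality preservation gives $V_0(z)\perp\widetilde V(x)-\widetilde V(y)$.
\item The identity $\|A+B\|_p^p=\|A\|_p^p+\|B\|_p^p$ for $A\perp B$ cancels the $z$-terms, yielding $\|\widetilde V(x)-\widetilde V(y)\|_p=\|x-y\|_p$.
\end{itemize}
If $\cM_1$ is infinite-dimensional, approximate a general pair by $e_nxe_n$ and $e_nye_n$, where $e_n=I-q_n$ and the $q_n$ are mutually orthogonal nonzero projections with $q_n\to0$ strongly. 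Then a spare $z_n\in q_nL_p(\cM_1)q_n$ always exists, and continuity of $\widetilde V$ finishes the argument.

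In finite dimensions no spare direction need exist (e.g.\ $\cM_1=\mathbb C$). There the paper shows $\widetilde V$ maps central summands onto central summands and applies the Schatten-class theorem to each. Your proposal does not isolate this case, and it is exactly where a Dye-based route breaks on $M_2(\mathbb C)$ summands.

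Smaller points:
\begin{itemize}
\item Step~1 already needs $V_0(-y)=-V_0(y)$, since $\|x+y\|_p$ is the distance from $x$ to $-y$.
\item The ``unique point at prescribed distances'' characterisation in Step~2 is unproved. The paper avoids it by treating $z=V_0(ax+by)$ and $w=V_0(ax-by)$ together. It applies Clarkson to $A=\tfrac12(z+w)\mp V_0(x)$ and $B=\tfrac12(z-w)$ to force equality in a triangle inequality, then uses strict convexity.
\item The type~III equality case you flag as the main obstacle is not one: the Raynaud--Xu equality theorem the paper cites (\cite[Theorem~5.1]{RX}) already covers Haagerup $L_p$.
\end{itemize}
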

Notably, our argument requires neither separability nor semifiniteness and
therefore applies, in particular, to type~$\mathrm{III}$ von Neumann
algebras without invoking tracial arguments.

Following the strategy of \cite{Polo}, we first prove that $V_0$ preserves
orthogonality (i.e., 
 \(
 xy^*=0\) and \(x^*y=0\), defined by $x\perp y$) and is antipodal:
\[
  V_0(-x)=-V_0(x)
  \qquad \bigl(x\in S(L_p(\cM_1))\bigr).
\]
The strict convexity of noncommutative $L_p$-spaces, together with the Clarkson inequality established in
Lemma~\ref{lem:Clarkson}, yields orthogonal additivity with real
coefficients: whenever $x,y\in S(L_p(\cM_1))$ satisfy $x\perp y$,
\begin{equation}\label{eq:intro-orth-add}
  V_0(ax+by)=aV_0(x)+bV_0(y),
  \qquad
  a,b\in\mathbb R,\quad |a|^p+|b|^p=1.
\end{equation}

We next consider the radial extension
\[
  \widetilde V(0):=0,
  \qquad
  \widetilde V(x)
  :=\|x\|_pV_0\left(\frac{x}{\|x\|_p}\right)
  \quad (x\ne0).
\]
The definition of $\widetilde V$ and \eqref{eq:intro-orth-add} imply that $\widetilde V$ is real homogeneous and
additive on orthogonal elements. To prove its global real linearity, we
distinguish between the infinite- and finite-dimensional cases.

In the infinite-dimensional case, we first show that if there exists
$z\in S(L_p(\cM_1))$ such that $z\perp x,y$,
$\widetilde V$ preserves distances, that is
\begin{align}\label{V preserve}
  \|\widetilde V(x)-\widetilde V(y)\|_p=\|x-y\|_p.
\end{align}
For arbitrary $x,y$, it follows by the
bimodule continuity of Haagerup $L_p$-spaces \cite[Chapter~II]{Terp}, \cite[Section~2]{RX}  that \eqref{V preserve} holds.
Thus $\widetilde V$ is a surjective isometry from $L_p(\cM_1)$ to $L_p(\cM_2)$, and
the Mazur--Ulam theorem \cite{MazurUlam} implies that it is real linear.

In the finite-dimensional case, let $z_1,\ldots,z_m$ be the central atoms
of $\cM_1$. By \eqref{eq:intro-orth-add}, there exist corresponding central
atoms $w_1,\ldots,w_m$ of $\cM_2$ such that
\[
  \widetilde V\bigl(z_\alpha L_p(\cM_1)\bigr)
  =w_\alpha L_p(\cM_2),
  \qquad 1\leq\alpha\leq m.
\]
Accordingly,
\[
  L_p(\cM_1)
  =\bigoplus_{\alpha=1}^m z_\alpha L_p(\cM_1),
  \qquad
  L_p(\cM_2)
  =\bigoplus_{\alpha=1}^m w_\alpha L_p(\cM_2).
\]
Each central summand is a finite-dimensional Schatten $p$-class \cite[Section~1.14 and Remark~6.2.3]{DPS}. By
\cite{Polo}, for every $\alpha$, there is a surjective real-linear isometry
\[
  T_\alpha:
  z_\alpha L_p(\cM_1)\longrightarrow
  w_\alpha L_p(\cM_2)
\]
extending the corresponding restriction of $V_0$. Combining \eqref{eq:intro-orth-add},
\[
  \widetilde V=\bigoplus_{\alpha=1}^m T_\alpha,
\]
and $\widetilde V$ is a surjective real-linear isometry.

Combining the two cases, we conclude that the radial extension
$\widetilde V$ is a surjective real-linear isometry from
$L_p(\cM_1)$ onto $L_p(\cM_2)$ extending $V_0$. This proves
Theorem~\ref{thm:main-result}.

\section{Preliminaries}
In this section we recall the notation and facts concerning Haagerup
noncommutative $L_p$-spaces used in the proof of Theorem
\ref{thm:main-result}. Further details can be found in
\cite{HaagerupLp,Terp} and \cite[Section~1.2]{JungeXuReduction}.

Let $\cM$ be a von Neumann algebra, with no separability assumption, and
let $I$, $\mathcal Z(\cM)$, and $\mathcal P(\cM)$ denote its identity,
center, and projection lattice, respectively. For a normed space $X$, put
\[
  S(X):=\{x\in X:\|x\|=1\}.
\]

Fix a normal faithful semifinite weight $\varphi$ on $\cM$. Let
\[
  \cR_\varphi:=\cM\rtimes_{\sigma^\varphi}\mathbb R
\]
be the crossed product, let $(\theta_s)_{s\in\mathbb R}$ be its dual
action, and let $\tau_\varphi$ be the canonical normal faithful semifinite
trace on $\cR_\varphi$, characterized by
\[
  \tau_\varphi\circ\theta_s=e^{-s}\tau_\varphi
  \qquad(s\in\mathbb R).
\]
If $L_0(\cR_\varphi,\tau_\varphi)$ denotes the algebra of
$\tau_\varphi$-measurable operators affiliated with $\cR_\varphi$, then,
for $1\leq p<\infty$,
\[
  L_p(\cM)
  :=\{x\in L_0(\cR_\varphi,\tau_\varphi):
  \theta_s(x)=e^{-s/p}x\text{ for every }s\in\mathbb R\}.
\]
The canonical identification $L_1(\cM)\cong\cM_*$ determines a functional
$\operatorname{Tr}$ on $L_1(\cM)$, and
\[
  \|x\|_p
  =\bigl(\operatorname{Tr}(|x|^p)\bigr)^{1/p},
  \qquad x\in L_p(\cM).
\]
Up to canonical isometry, this construction is independent of $\varphi$;
in particular, it requires neither semifiniteness of $\cM$ nor
separability \cite{HaagerupLp,Terp,JungeXuReduction}.

\section{Proof of the main result}

\begin{lemma}[{\cite[Lemma 2.1]{Ding1}}]\label{lem:antipodes}
Let $E$ and $F$ be normed spaces, with $E$ strictly convex, and let
$V_0:S(E)\to S(F)$ be a mapping. Suppose that
\[
  -V_0(S(E))\subseteq V_0(S(E))
\]
and
\[
  \|V_0(x_1)-V_0(x_2)\|
  \leq \|x_1-x_2\|,
  \qquad x_1,x_2\in S(E).
\]
Then $V_0$ is one-to-one and
\[
  V_0(-x)=-V_0(x),
  \qquad x\in S(E).
\]
\end{lemma}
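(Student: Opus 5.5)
The plan is the classical argument of Ding for Lemma~\ref{lem:antipodes}, using only strict convexity of $E$ and the contraction property of $V_0$.

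\emph{Step 1: $V_0$ is injective.} Let $x\in S(E)$. Since $V_0(x)\in V_0(S(E))$, the hypothesis $-V_0(S(E))\subseteq V_0(S(E))$ gives some $y\in S(E)$ with $V_0(y)=-V_0(x)$. Then
\[
  2=\|V_0(x)-V_0(y)\|\leq\|x-y\|\leq 2 ,
\]
so $\|x-y\|=2$. Now $\|x\|=\|-y\|=1$ and $\|x+(-y)\|=2$. Strict convexity of $E$ forces $x=-y$, that is, $y=-x$. This already gives $V_0(-x)=-V_0(x)$ for every $x\in S(E)$.

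Injectivity follows from the same reasoning. Suppose $V_0(x_1)=V_0(x_2)$, and choose $y$ with $V_0(y)=-V_0(x_1)=-V_0(x_2)$. Applying the argument above to the pair $(x_1,y)$ and to the pair $(x_2,y)$ gives $y=-x_1$ and $y=-x_2$. Hence $x_1=x_2$.

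\emph{Step 2: ordering.} The argument needs no separate ordering, since antipodality falls out directly from Step~1. The only point to check is the strict convexity step. It uses the standard fact that in a strictly convex space, $\|u\|=\|v\|=1$ and $\|u+v\|=2$ imply $u=v$. Here we take $u=x$ and $v=-y$.

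\emph{Main obstacle.} The only delicate point is ensuring a preimage of $-V_0(x)$ exists. The hypothesis provides exactly this, so no real obstacle remains. In the application, strict convexity of $L_p(\cM)$ for $1<p<\infty$ is what is needed, and it will come from the Clarkson-type inequality.
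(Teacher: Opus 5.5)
Your proof is correct and is the standard argument behind Ding's lemma, which the paper cites rather than reproves: any $y\in S(E)$ with $V_0(y)=-V_0(x)$ satisfies $2=\|V_0(x)-V_0(y)\|\le\|x-y\|\le 2$, so strict convexity forces $y=-x$, and this one observation gives both antipodality and injectivity. A small aside on your closing remark: the paper takes strict convexity of $L_p(\cM)$ from Kosaki's theorem, and the form of Clarkson's inequality in Lemma~\ref{lem:Clarkson} would give it directly only for $p>2$.
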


\begin{remark}\label{rem:antipodes}
The space $L_p(\cM)$ is strictly convex for $1<p<\infty$; see
\cite[Theorem 4.2]{kosaki}. Consequently, Lemma \ref{lem:antipodes}
implies that every surjective isometry
\(
  V_0:S(L_p(\cM_1))\longrightarrow S(L_p(\cM_2))
\)
satisfies
\[
  V_0(-x)=-V_0(x),
  \qquad x\in S(L_p(\cM_1)).
\]
\end{remark}

\begin{lemma}[{\cite[Theorem~5.1]{RX}}]\label{lem:Clarkson}
Let $\cM$ be a von Neumann algebra and let $A,B\in L_p(\cM)$, where
$1\leq p<\infty$ and $p\ne2$. Equality
\[
  \|A+B\|_p^p+\|A-B\|_p^p
  =2(\|A\|_p^p+\|B\|_p^p)
\]
holds in the Clarkson inequality
\begin{align*}
  \|A+B\|_p^p+\|A-B\|_p^p
    &\leq 2(\|A\|_p^p+\|B\|_p^p), &&1\leq p<2,\\
  \|A+B\|_p^p+\|A-B\|_p^p
    &\geq 2(\|A\|_p^p+\|B\|_p^p), &&2<p<\infty,
\end{align*}
if and only if $A\perp B$.
\end{lemma}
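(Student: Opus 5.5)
The plan is to reduce the Clarkson identity to two elementary trace inequalities for positive operators, each of which is strict unless a specific orthogonality relation holds. We work in the Haagerup space $L_p(\cM\otimes M_2)$, which is identified with $2\times2$ matrices over $L_p(\cM)$ with $\operatorname{Tr}_2=\operatorname{Tr}\otimes\mathrm{tr}$. Put
\[
  W=\begin{pmatrix}A&B\\ B&A\end{pmatrix}.
\]
Conjugating by the unitary $U=\frac1{\sqrt2}\begin{pmatrix}1&1\\1&-1\end{pmatrix}$ gives $UWU^*=\mathrm{diag}(A+B,A-B)$. Hence
\[
  \|A+B\|_p^p+\|A-B\|_p^p=\operatorname{Tr}_2(|W|^p)=\operatorname{Tr}_2\bigl((W^*W)^{r}\bigr),\qquad r=p/2,
\]
and
\[
  W^*W=\begin{pmatrix}A^*A+B^*B & A^*B+B^*A\\ A^*B+B^*A & A^*A+B^*B\end{pmatrix}.
\]

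First step (pinching). Let $\Phi$ be the diagonal pinching, i.e.\ the conditional expectation onto $\cM\oplus\cM$. For $r>1$ we have $\operatorname{Tr}_2(\Phi(X)^r)\le\operatorname{Tr}_2(X^r)$ for positive $X$. To see this, write $\Phi(X)=\frac12(X+DXD)$ with $D=\mathrm{diag}(1,-1)$ and use convexity of $X\mapsto\operatorname{Tr}_2(X^r)$. For $r<1$ the inequality reverses by concavity. Since $t\mapsto t^r$ is strictly convex (resp.\ strictly concave) for $r\neq1$, equality forces $X=DXD$, i.e.\ $A^*B+B^*A=0$. Applying the same inequality gives
\[
  \operatorname{Tr}_2\bigl((W^*W)^r\bigr)\ \gtrless\ 2\operatorname{Tr}\bigl((|A|^2+|B|^2)^{r}\bigr).
\]

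Second step (superadditivity). For positive $C,D$ and $r>1$ we use the McCarthy-type inequality $\operatorname{Tr}((C+D)^r)\ge\operatorname{Tr}(C^r)+\operatorname{Tr}(D^r)$, reversed for $0<r<1$, with $C=|A|^2$ and $D=|B|^2$. The equality case should be $CD=0$. From $A^*AB^*B=0$ we get $(AB^*)^*(AB^*)=BA^*AB^*=0$, hence $AB^*=0$. Combining the two steps proves the Clarkson inequality in both ranges, and shows that equality forces $AB^*=0$. Running the same argument with $W^*$ in place of $W$ (equivalently, with $A^*,B^*$ in place of $A,B$) forces $A^*B=0$, so equality implies $A\perp B$.

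Converse. If $A\perp B$, then $|A\pm B|^2=|A|^2+|B|^2$ with $|A|^2|B|^2=0$, so the left and right supports of $A$ and $B$ are disjoint. Then $\|A\pm B\|_p^p=\|A\|_p^p+\|B\|_p^p$ by functional calculus, which gives equality.

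Main obstacle. The delicate part is the equality characterizations in the non-tracial Haagerup setting, where $\operatorname{Tr}$ is only a functional on $L_1(\cM)$. The plan is to compute everything inside the semifinite crossed product $(\cR_\varphi,\tau_\varphi)$. There all the operators above are $\tau_\varphi$-measurable and the relevant powers lie in $L_1(\cM)$, since the homogeneity $\theta_s(x)=e^{-s/p}x$ is preserved by the algebraic operations used. The strict convexity and concavity equality statements then reduce to the known tracial results for $\tau$-measurable operators, transferred through the identity $\operatorname{Tr}(x)=\tau_\varphi(\chi x)$ for a suitable cutoff. Alternatively, they follow from the usual spectral argument $\operatorname{Tr}(f(X))$ versus $f$ applied to diagonal compressions.
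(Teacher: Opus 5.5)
The paper does not prove this lemma; it imports it from Raynaud--Xu. Your pinching argument is the standard route and much of it works: the conjugation $UWU^*=\mathrm{diag}(A+B,A-B)$, the inequality for $p>2$ (via convexity of $\|\cdot\|_r^r$), the step from $|A|^2|B|^2=0$ to $AB^*=0$, the adjoint trick giving $A^*B=0$, and the converse.

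\textbf{The gap.} The equality clause is the real content of the lemma, and it rests entirely on your Step~2 claim that equality in $\operatorname{Tr}((C+D)^r)\ge\operatorname{Tr}(C^r)+\operatorname{Tr}(D^r)$ (reversed for $r<1$) ``should be'' $CD=0$. You assert this but never prove it. Your Step~1 equality conclusion $A^*B+B^*A=0$ is never used afterwards, so nothing else carries the argument.

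Your plan for the Haagerup setting also fails. For $0\ne x\in L_p(\cM)$ one has $\tau_\varphi(\chi_{(s,\infty)}(|x|))=s^{-p}\|x\|_p^p$ for all $s>0$, hence $\tau_\varphi(|x|^p)=\infty$. Tracial equality statements for $\tau_\varphi$ therefore degenerate to $\infty=\infty$. Moreover, $\operatorname{Tr}$ is not a restriction of $\tau_\varphi$. Any cutoff formula of the form $\operatorname{Tr}(h)=\tau_\varphi(k^{1/2}hk^{1/2})$ inserts a weight $k$ that does not commute with $C$ and $D$, so tracial McCarthy-type equality theorems do not transfer through it.

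\textbf{How to close it.} Use the tool you already have. For $C,D\ge0$ put $Z=\begin{pmatrix}C^{1/2}&D^{1/2}\\0&0\end{pmatrix}$. Then $ZZ^*=\mathrm{diag}(C+D,0)$, and $Z^*Z$ has diagonal $C,D$ and off-diagonal entries $C^{1/2}D^{1/2}$, $D^{1/2}C^{1/2}$. Also $\operatorname{Tr}_2((ZZ^*)^r)=\|Z\|_{2r}^{2r}=\operatorname{Tr}_2((Z^*Z)^r)$. So McCarthy's inequality, with equality iff $C^{1/2}D^{1/2}=0$ (i.e.\ $CD=0$), is exactly your pinching inequality for $X=Z^*Z$ together with its equality case $X=JXJ$, where $J=\mathrm{diag}(1,-1)$.

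For $p>2$ (so $r>1$), that equality case follows from strict convexity of $L_r(\cM\otimes M_2)$ (Kosaki): $\|\tfrac12(X+JXJ)\|_r=\|X\|_r=\|JXJ\|_r$ forces $X=JXJ$.

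For $1\le p<2$ (so $\tfrac12\le r<1$), there is no norm to exploit, and ``$t^r$ is strictly concave'' is not an argument by itself. You need three ingredients:
\begin{enumerate}
\item operator concavity of $t^r$ for $\tau_\varphi$-measurable operators, so that $(\tfrac12(X+JXJ))^r-\tfrac12(X^r+JX^rJ)$ is a positive element of $L_1$;
\item faithfulness of $\operatorname{Tr}$ on $L_1(\cM\otimes M_2)_+$;
\item strict midpoint operator concavity of $t^r$. This follows from the integral representation of $t^r$ and the resolvent identity $\tfrac{a+b}{2}-\bigl(\tfrac{a^{-1}+b^{-1}}{2}\bigr)^{-1}=\tfrac12(a-b)(a+b)^{-1}(a-b)$.
\end{enumerate}

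As written, your proposal establishes the converse direction and, for $p>2$, the inequality. It does not establish that equality implies $A\perp B$.
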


\begin{lemma}\label{lem:orth-preserving}
Let $\cM_1$ and $\cM_2$ be von Neumann algebras, and let
\(
  V_0:S(L_p(\cM_1))\longrightarrow S(L_p(\cM_2))
\)
be a surjective isometry, where $1<p\ne 2<\infty$. Then
\[
  x\perp y
  \quad\Longleftrightarrow\quad
  V_0(x)\perp V_0(y)
\]
for all $x,y\in S(L_p(\cM_1))$.
\end{lemma}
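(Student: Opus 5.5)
Since $V_0$ is a surjective isometry between unit spheres, its inverse $V_0^{-1}$ is also a surjective isometry between unit spheres of Haagerup $L_p$-spaces with the same $p$. It therefore suffices to prove the forward implication $x\perp y\Rightarrow V_0(x)\perp V_0(y)$ and then apply it to $V_0^{-1}$. The plan is to express orthogonality of two unit vectors purely in terms of the metric on the sphere, using the equality case of the Clarkson inequality in Lemma~\ref{lem:Clarkson}, and then transport this metric characterization through $V_0$.

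For $x,y\in S(L_p(\cM))$, Lemma~\ref{lem:Clarkson} with $A=x$, $B=y$ shows that $x\perp y$ if and only if
\[
  \|x+y\|_p^p+\|x-y\|_p^p=4 .
\]
By Remark~\ref{rem:antipodes}, $V_0$ is antipodal, so
\[
  \|V_0(x)+V_0(y)\|_p=\|V_0(x)-V_0(-y)\|_p=\|x-(-y)\|_p=\|x+y\|_p ,
\]
since $-y\in S(L_p(\cM_1))$. Likewise $\|V_0(x)-V_0(y)\|_p=\|x-y\|_p$, because $V_0$ is an isometry.

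Hence, if $x\perp y$, the identity $\|x+y\|_p^p+\|x-y\|_p^p=4$ transfers verbatim to $V_0(x),V_0(y)$. These are unit vectors in $L_p(\cM_2)$, so applying Lemma~\ref{lem:Clarkson} in $L_p(\cM_2)$ gives $V_0(x)\perp V_0(y)$. The converse direction follows either by the same computation read backwards, or by applying the argument to $V_0^{-1}$, which is antipodal by Remark~\ref{rem:antipodes}.

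The main point to check is that Lemma~\ref{lem:Clarkson} applies in the form needed: the equality characterization must hold for arbitrary elements of $L_p(\cM)$ over an arbitrary (possibly type III) von Neumann algebra. As stated, it does, and with $\|A\|_p=\|B\|_p=1$ the right-hand side equals $4$. The only other ingredient is antipodality, which is used to handle $\|x+y\|_p$. Consequently no real obstacle is expected beyond correctly invoking these two earlier results.
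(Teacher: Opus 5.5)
Your proposal is correct and follows essentially the same route as the paper. Antipodality (Remark~\ref{rem:antipodes}) and the isometry property carry $\|x\pm y\|_p$ over to $\|V_0(x)\pm V_0(y)\|_p$, the equality case of Lemma~\ref{lem:Clarkson} turns this into orthogonality, and the converse follows from $V_0^{-1}$ or, as you note, from the fact that the norm identity is an equivalence.
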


\begin{proof}
Let $x,y\in S(L_p(\cM_1))$ satisfy $x\perp y$. Then
$$\left\|x+y\right\|_p^p=\left\|x-y\right\|_p^p=\left\|x\right\|_p^p+\left\|y\right\|_p^p.$$ 
Since $V_0$ is an isometry and by Remark \ref{rem:antipodes}, we have that
\begin{align*}
  \|V_0(x)+V_0(y)\|_p
    &=\|V_0(x)-V_0(-y)\|_p=\|x+y\|_p,\\
  \|V_0(x)-V_0(y)\|_p
    &=\|x-y\|_p.
\end{align*}
Thus, by Lemma \ref{lem:Clarkson}, we have $V_0(x)\perp V_0(y)$. The converse follows by applying the same argument to \(V_0^{-1}\).
\end{proof}

\begin{theorem}\label{thm:orthogonal-addition}
Let $\cM_1$ and $\cM_2$ be von Neumann algebras, and let
\[
  V_0:S(L_p(\cM_1))
  \longrightarrow S(L_p(\cM_2))
\]
be a surjective isometry, where $1<p<\infty$ and $p\ne2$. If
$x,y\in S(L_p(\cM_1))$ satisfy $x\perp y$, then
\[
  V_0(ax+by)=aV_0(x)+bV_0(y)
\]
for every $a,b\in\mathbb R$ satisfying $|a|^p+|b|^p=1$.
\end{theorem}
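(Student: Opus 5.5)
The plan is to reduce the statement to a uniqueness property of the $L_p$-norm that does not involve $V_0$ at all. Fix $x\perp y$ in $S(L_p(\cM_1))$ and put $u=V_0(x)$, $v=V_0(y)$. By Lemma \ref{lem:orth-preserving}, $u\perp v$. Take $a,b\in\mathbb R$ with $|a|^p+|b|^p=1$, and set $z=ax+by\in S(L_p(\cM_1))$ and $w=V_0(z)$.

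For orthogonal elements the $p$-norms add, i.e.\ $\|c x+dy\|_p^p=|c|^p+|d|^p$. Hence
\[
\|z-\varepsilon x\|_p^p=|a-\varepsilon|^p+|b|^p,\qquad \|z-\varepsilon y\|_p^p=|a|^p+|b-\varepsilon|^p,\qquad \varepsilon=\pm1 .
\]
Since $V_0$ is an isometry and is antipodal (Remark \ref{rem:antipodes}), $w$ satisfies the same four identities with $u,v$ in place of $x,y$. The element $au+bv\in S(L_p(\cM_2))$ also satisfies them. It therefore suffices to prove the following claim.

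\emph{Claim.} In any $L_p(\cM)$, if $u\perp v$ are unit vectors, $w\in S(L_p(\cM))$, and $\|w-\varepsilon u\|_p^p=|a-\varepsilon|^p+|b|^p$ and $\|w-\varepsilon v\|_p^p=|a|^p+|b-\varepsilon|^p$ for $\varepsilon=\pm1$, then $w=au+bv$.

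To prove the claim, let $e,e'$ be the left and right supports of $u$ and $f,f'$ those of $v$. Orthogonality gives $ef=0$ and $e'f'=0$. Set $g=I-e-f$ and $g'=I-e'-f'$, so that $w$ splits into a $3\times 3$ block matrix. Write $w_{11}=ewe'$, $w_{22}=fwf'$, and let $w_{33}=gwg'$ and the off-diagonal blocks form the rest. Because $u$ and $v$ live in the diagonal blocks $(1,1)$ and $(2,2)$, the diagonal pinching $P$ satisfies
\[
\|w-\varepsilon u\|_p^p\ \ge\ \|P(w-\varepsilon u)\|_p^p=\|w_{11}-\varepsilon u\|_p^p+\|w_{22}\|_p^p+\|w_{33}\|_p^p .
\]
Pinching is a contraction on Haagerup $L_p$, since it is an average of conjugations by the unitaries $\pm e\pm f\pm g$. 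By strict convexity (\cite[Theorem 4.2]{kosaki}), equality holds only if the off-diagonal blocks of $w-\varepsilon u$ vanish. The same bound holds with $v$ and $w_{22}$. Adding the two $\varepsilon=\pm1$ inequalities for $u$ and comparing with the prescribed values, I will apply Lemma \ref{lem:Clarkson} to the pair $(w_{11},u)$. This gives, say for $2<p$,
\[
\|w_{11}+u\|_p^p+\|w_{11}-u\|_p^p\ \ge\ 2(\|w_{11}\|_p^p+1).
\]
The sum of the two prescribed values is $|1+a|^p+|1-a|^p+2|b|^p$. Together with $\|w\|_p=1$, this should force $\|w_{11}\|_p\le|a|$ and $\|w_{22}\|_p\le|b|$. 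I expect the bookkeeping to show that every inequality in the chain is an equality. The case $1<p<2$ is handled symmetrically, using the reversed Clarkson inequality on the pair $(w,\pm u)$ instead of on $(w_{11},u)$. The equality statements then give three things: the off-diagonal blocks and $w_{33}$ vanish; $\|w_{11}\|_p=|a|$ and $\|w_{22}\|_p=|b|$; and, from the two-point data $\|w_{11}\mp u\|_p^p=|a\mp1|^p$, that $w_{11}=au$. The last step uses strict convexity: $w_{11}$ lies on the segment through $\pm u$ at the prescribed distances, and by strict convexity the only such point on the sphere of radius $|a|$ is $au$. Likewise $w_{22}=bv$, so $w=au+bv$.

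The main obstacle is the real-variable step. One must show that the scalar constraints coming from pinching plus Clarkson really pin down $\|w_{11}\|_p$ and $\|w_{22}\|_p$ and force all equalities, uniformly in $p\ne2$ and in the signs of $a,b$. First I would check this with $t=\|w_{11}\|_p^p$ and $s=\|w_{22}\|_p^p$, using the convexity of $\lambda\mapsto|\lambda\pm1|^p$ and the constraint $t+s\le1$. If that does not close directly, I would first handle the case $a,b\ge0$ and then recover the other sign cases via antipodality and the substitutions $x\mapsto -x$, $y\mapsto -y$.
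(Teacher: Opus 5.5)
\textbf{There is a genuine gap.} The single-point Claim is never reached, because the inequalities your pinching--Clarkson chain keeps carry no information about $w$.

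For $p>2$ the chain gives
\[
  2\bigl(1+\|w_{11}\|_p^p+\|w_{22}\|_p^p+\|w_{33}\|_p^p\bigr)
  \le \|w+u\|_p^p+\|w-u\|_p^p
  =|1+a|^p+|1-a|^p+2|b|^p .
\]
By the scalar Clarkson inequality the right-hand side is at least $2(1+|a|^p)+2|b|^p=4$. Pinching already bounds the left-hand side by $4$, since $\|w_{11}\|_p^p+\|w_{22}\|_p^p+\|w_{33}\|_p^p\le\|w\|_p^p=1$. So you learn nothing new, and certainly not $\|w_{11}\|_p\le|a|$.

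The hoped-for mechanism that ``every inequality in the chain is an equality'' cannot work either. At the true solution $w_{11}=au$, and $au\not\perp u$ for $a\ne0$. By Lemma~\ref{lem:Clarkson}, the Clarkson step for $(w_{11},u)$ is therefore strict there.

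No bookkeeping can rescue this. Take $a=b=2^{-1/p}$ and any $c\in[1-a,a)$. The values $\|w_{11}\|_p=\|w_{22}\|_p=c$, $\|w_{33}\|_p=0$, $\|w_{11}\mp u\|_p=\|w_{22}\mp v\|_p=1\mp c$ satisfy every inequality you keep. This follows from $(1-c)^p+c^p\le(1-a)^p+a^p$, $(1+c)^p+c^p\le(1+a)^p+a^p$ and $2c^p\le1$. What would exclude them is exactly the off-diagonal contribution that pinching throws away.

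For $1<p<2$ the problem is the same. The reversed Clarkson inequality on $(w,\pm u)$ reads $|1+a|^p+|1-a|^p+2|b|^p\le4$. Only prescribed numbers appear, so it is a tautology.

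\textbf{The missing idea} is to work with the two points $h_\pm=ax\pm by$ at once, after reducing to $a,b>0$ by antipodality; this is what the paper does.
\begin{enumerate}
\item Put $z=V_0(h_+)$ and $w=V_0(h_-)$. Antipodality gives $\|z+w\|_p=\|h_++h_-\|_p=2a$ and $\|z-w\|_p=2b$.
\item Hence $z_1=\tfrac12(z+w)$ and $z_2=\tfrac12(z-w)$ have norms exactly $a$ and $b$.
\item For $p>2$, apply Clarkson to $A=z_1-V_0(x)$ and $B=z_2$, using $\|z-V_0(x)\|_p^p=\|w-V_0(x)\|_p^p=(1-a)^p+b^p$. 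This yields $\|z_1-V_0(x)\|_p\le 1-a$.
\item The reverse triangle inequality gives the opposite bound, and strict convexity then forces $z_1=aV_0(x)$.
\item In the same way $z_2=bV_0(y)$. For $1<p<2$, use $A=z_1+V_0(x)$ instead.
\end{enumerate}
The argument closes because the pair fed into Clarkson becomes $\bigl((a-1)V_0(x),\,bV_0(y)\bigr)$ at the solution. That pair is orthogonal, so the inequality is tight there. Your pairs $(w_{11},u)$ and $(w,u)$ are not orthogonal at the solution, which is why your chain has slack. This route needs neither your single-point Claim nor any pinching.
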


\begin{proof}
By Remark \ref{lem:antipodes}, we only need to prove when \(a,b>0\).

Put
\[
        h_+=ax+by,\qquad h_-=ax-by,
\]
and set
\[
        z=V_0(h_+),\qquad w=V_0(h_-),\qquad
        X=V_0(x),\qquad Y=V_0(y).
\]
By Remark \ref{rem:antipodes}, we have that
\begin{align}\label{-h}
V_0(-h_-)=-w.
\end{align}
Then we observe that
\begin{align}\label{z+w}
       \|z+w\|_p \stackrel{\eqref{-h}}{=}\|h_++h_-\|_p=2a,\qquad
        \|z-w\|_p=\|h_+-h_-\|_p=2b.
\end{align}
Let
\[
        z_1=\frac{z+w}{2},\qquad z_2=\frac{z-w}{2}.
\]
Then 
\begin{align}\label{z_1,z_2}
\|z_1\|_p\stackrel{\eqref{z+w}}{=}a, \quad \|z_2\|_p\stackrel{\eqref{z+w}}{=}b
\end{align}
and 
\begin{align}\label{z_1+z_2}
z=z_1+z_2,\quad w=z_1-z_2.
\end{align}

{\bf Case 1: \(p>2\).}  
Since
\[
        \|z-X\|_p^p=\|w-X\|_p^p=(1-a)^p+b^p,
\]
it follows from Lemma \ref{lem:Clarkson} with \(A=z_1-X\) and \(B=z_2\) that
\begin{align*}
        (1-a)^p+b^p&=
        \frac{\|z-X\|_p^p+\|w-X\|_p^p}{2}\stackrel{\eqref{z_1+z_2}}{=}
        \frac{\|z_1-X+z_2\|_p^p+\|z_1-X-z_2\|_p^p}{2}
        \\& \ge \|z_1-X\|_p^p+b^p.
\end{align*}
Thus 
\begin{align}\label{z_1-x le}
\|z_1-X\|_p\le 1-a.
\end{align}
By triangle inequality, we have that
\begin{align}\label{z_1-x ge}
\|z_1-X\|_p\stackrel{\eqref{z_1,z_2}}{\ge} 1-a.
\end{align}
Hence
\[
        \|z_1-X\|_p\stackrel{\eqref{z_1-x le},\eqref{z_1-x ge}}{=}1-a.
\]
By strict convexity of \(L_p(\cM_2)\), we have
\begin{align}\label{z_1=aX}
    z_1=aX.
\end{align}

Similarly, 
applying Lemma \ref{lem:Clarkson}  with \(A=z_2-Y\), \(B=z_1\), and using
\[
        \|z-Y\|_p^p=\|w+Y\|_p^p=a^p+(1-b)^p,
\]
we obtain that 
\begin{align}\label{z_2=bY}
    z_2=bY.
\end{align}
Hence
\[
        z\stackrel{\eqref{z_1+z_2}}{=}z_1+z_2\stackrel{\eqref{z_1=aX},\eqref{z_2=bY}}{=}aX+bY.
\]

{\bf Case 2: \(1<p<2\).}  We can apply Lemma \ref{lem:Clarkson}  with \(A=z_1+X\), \(B=z_2\), and then with \(A=z_2+Y\), \(B=z_1\).
The same triangle equality argument gives \(z_1=aX\) and \(z_2=bY\).

Therefore, $V_0(ax+by)=aV_0(x)+bV_0(y)$ in all cases.
\end{proof}

For a surjective sphere isometry $V_0$ as above, its \emph{radial extension}
is the map defined by
\begin{align}\label{def V}
  \widetilde V(0):=0,
  \qquad
  \widetilde V(x)
  :=\|x\|_pV_0\left(\frac{x}{\|x\|_p}\right)
  \quad(x\ne0).
\end{align}

\begin{theorem}
Let $\cM_1$ and $\cM_2$ be finite-dimensional von Neumann algebras, and let
\(
  V_0:S(L_p(\cM_1))\longrightarrow S(L_p(\cM_2))
\)
be a surjective isometry, where $1<p\ne 2<\infty$. Then $\cM_2$ is
finite-dimensional, and the radial extension
\[
  \widetilde V:L_p(\cM_1)\longrightarrow L_p(\cM_2)
\]
is a surjective real-linear isometry.
\end{theorem}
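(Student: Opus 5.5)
The plan is to follow the sketch from the introduction: reduce to the Schatten case of \cite{Polo} block by block, and use Theorem~\ref{thm:orthogonal-addition} to glue the blocks together.

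\textbf{Step 1: $\cM_2$ is finite-dimensional.} $L_p(\cM_1)$ is finite-dimensional, so $S(L_p(\cM_1))$ is compact, and hence so is its image $S(L_p(\cM_2))$. A normed space with compact unit sphere is finite-dimensional. Since $\cM_2$ embeds (as $x\mapsto xD^{1/p}$ for a faithful state density $D$, or simply since $L_p(\cM_2)\supseteq$ a copy of $e\cM_2 e$ for projections) into $L_p(\cM_2)$, $\cM_2$ is finite-dimensional too. In finite dimensions $L_p(\cM_i)\cong\bigoplus_\alpha S_p^{n_\alpha}$, an $\ell_p$-direct sum of Schatten classes, and orthogonality in this sum means $xy^*=x^*y=0$ blockwise.

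\textbf{Step 2: central atoms are matched.} Let $z_1,\dots,z_m$ be the minimal central projections of $\cM_1$. I characterize the summand $z_\alpha L_p(\cM_1)$ metrically via orthogonality, which Lemma~\ref{lem:orth-preserving} shows $V_0$ preserves in both directions. For a set $A$ write $A^\perp$ for its orthogonal complement in the sphere. The subsets $(\{x\}^\perp)^\perp$ correspond to "support" subspaces $eL_pf$. A summand is characterized as follows: a nonzero such set $K$ is contained in a single block iff there are no nonzero $u,v\in K$ that are \emph{centrally separated}, meaning every element orthogonal to $u$ is in some sense unconstrained by $v$. Concretely, I use the description "$x,y$ lie in different blocks iff $x\perp y$ and every element orthogonal to neither... ". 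This is the delicate part. The cleaner route is to define $x\sim y$ iff there is a chain $x=x_0,x_1,\dots,x_k=y$ of sphere elements with $x_{i}\not\perp x_{i+1}$. Two elements are in the same equivalence class iff they are supported in overlapping block sets. The classes of elements supported in a single block form exactly the sets $S(z_\alpha L_p)$. To isolate these, I take the minimal nonempty sets of the form $\{x\}^{\perp\perp}$ and group their elements by the relation "not orthogonal to some common element within one class". Since $V_0$ and $V_0^{-1}$ preserve $\perp$, they preserve all these notions, so $V_0(S(z_\alpha L_p(\cM_1)))=S(w_\alpha L_p(\cM_2))$ for a bijection of central atoms. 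The same correspondence gives matching counts. I expect this combinatorial characterization to be the main obstacle. A fallback is to use the fact that $x\in S(z_\alpha L_p)$ iff $\{x\}^\perp\supseteq S(z_\beta L_p)$ for all $\beta\ne\alpha$, and to show the family $\{S(z_\beta L_p)\}$ is recognizable as the maximal subsets $K$ such that every $x\in K$ and every $y\perp K$ satisfy $y\perp K$ in a "full" way.

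\textbf{Step 3: blockwise extension and gluing.} The restriction $V_0:S(z_\alpha L_p(\cM_1))\to S(w_\alpha L_p(\cM_2))$ is a surjective isometry between unit spheres of finite-dimensional Schatten $p$-classes. By \cite{Polo} it extends to a surjective real-linear isometry $T_\alpha$, which necessarily coincides with the radial extension $\widetilde V$ on $z_\alpha L_p(\cM_1)$.

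For general $x=\sum_\alpha x_\alpha$ with $x_\alpha\in z_\alpha L_p(\cM_1)$, the $x_\alpha$ are mutually orthogonal. Iterating Theorem~\ref{thm:orthogonal-addition} by induction on the number of nonzero summands gives
\[
\widetilde V\Bigl(\sum_\alpha x_\alpha\Bigr)=\sum_\alpha\widetilde V(x_\alpha).
\]
The induction uses that $\sum_{\alpha<k}x_\alpha\perp x_k$, together with the homogeneity of $\widetilde V$ and the identity $\|\sum x_\alpha\|_p^p=\sum\|x_\alpha\|_p^p$. Hence $\widetilde V=\bigoplus_\alpha T_\alpha$.

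This map is real-linear and surjective, and it is isometric because $\|\sum_\alpha y_\alpha\|_p^p=\sum_\alpha\|y_\alpha\|_p^p$ for $y_\alpha\in w_\alpha L_p(\cM_2)$.
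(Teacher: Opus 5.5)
Your Steps 1 and 3 are essentially fine. They match the paper's compactness argument and its final gluing via Theorem~\ref{thm:orthogonal-addition} plus the Schatten-class result. The gap is Step~2, which you yourself flag as the crux: it is not proved, and the concrete relation you propose does not work. The chain relation on the whole sphere is trivial. If $x\perp y$, put $u=(x+y)/\|x+y\|_p$; then $u^*x=x^*x/\|x+y\|_p\neq0$ and $u^*y=y^*y/\|x+y\|_p\neq0$, so $x\sim u\sim y$. Hence $S(L_p(\cM_1))$ is a single class, unit vectors from different central summands are equivalent, and the relation cannot isolate the sets $S(z_\alpha L_p(\cM_1))$. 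Your later refinements are either circular or undefined. ``Not orthogonal to some common element within one class'' refers to the very classes being defined, and ``in a `full' way'' has no precise meaning. Your fallback presupposes that $V_0$ already transports the family $\{S(z_\beta L_p(\cM_1))\}_\beta$. So you never show that $V_0(S(z_\alpha L_p(\cM_1)))=S(w_\alpha L_p(\cM_2))$ for a central atom $w_\alpha$ of $\cM_2$. Without this, Step~3 cannot apply the Schatten-class theorem blockwise.

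The paper closes this gap by using linear structure, not just $\perp$. Let $a_\alpha$ and $b_\alpha$ be the suprema of the left and right supports of $\widetilde V(z_\alpha L_p(\cM_1))$. Lemma~\ref{lem:orth-preserving} makes the $a_\alpha$ (resp.\ $b_\alpha$) mutually orthogonal. Orthogonal additivity writes every $y=\widetilde V(x)$ as $\sum_\alpha\widetilde V(z_\alpha x)$ with $\widetilde V(z_\alpha x)\in a_\alpha L_p(\cM_2)b_\alpha$, whence $a_\alpha\cM_2b_\beta=\{0\}$ for $\alpha\neq\beta$. Together with $\sum_\alpha a_\alpha=\sum_\alpha b_\alpha=I_2$, this forces $a_\alpha=b_\alpha=:w_\alpha$ to be central and $\widetilde V(z_\alpha L_p(\cM_1))=w_\alpha L_p(\cM_2)$. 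Atomicity of $w_\alpha$ follows by running the argument for $V_0^{-1}$.

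Your orthogonality-only idea can also be repaired by restricting the chain relation to \emph{minimal} elements. In finite dimensions $\{x\}^{\perp\perp}=S(\ell(x)L_p(\cM_1)r(x))$, so the minimal sets of this form come from $x$ with minimal support projections. Two such elements in different central summands are orthogonal. Two such elements $\xi_1\eta_1^*$ and $\xi_2\eta_2^*$ of the same matrix block are linked through $\xi_1\eta_2^*$. Thus the chain classes $\mathcal C_\alpha$ of minimal elements correspond exactly to the central atoms, with $\mathcal C_\alpha^{\perp\perp}=S(z_\alpha L_p(\cM_1))$. Since $V_0$ is a bijection preserving $\perp$ in both directions, it carries all of this over and yields the block matching with central atoms. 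That would be a genuinely different, purely order-theoretic route to the paper's conclusion, but it is not what your proposal currently contains.
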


\begin{proof}
We observe that $S(L_p(\cM_1))$ is compact. Since $V_0$ is a surjective isometry, the sphere of
$S(L_p(\cM_2))$ is compact, also. Hence, $L_p(\cM_2)$, and therefore $\cM_2$, is
finite-dimensional. 

Choose faithful normal traces $\tau_i$ on $\cM_i$,
$i=1,2$, and use the standard isometric identifications of the Haagerup
spaces $L_p(\cM_i)$ with the tracial spaces $L_p(\cM_i,\tau_i)$; see
\cite{kosaki,RX}. Since $\cM_i$ is finite-dimensional,
$\tau_i(I_i)<\infty$, and
\[
  \|x\|_{p,\tau_i}^p
  =\tau_i(|x|^p)
  \leq \|x\|_{\cM_i}^p\tau_i(I_i)
  <\infty
  \qquad(x\in\cM_i).
\]
Moreover, $L_p(\cM_i,\tau_i)$ is the completion of $\cM_i$ in the
$L_p$-norm, and the finite-dimensional space $\cM_i$ is already complete
for this norm. Consequently,
\begin{equation}\label{eq:finite-Lp-realization}
  L_p(\cM_i)
  \cong L_p(\cM_i,\tau_i)
  =\cM_i
  \qquad(i=1,2).
\end{equation}

We observe that the radial extension of $V_0^{-1}$ is the inverse of $\widetilde V$. Then we have that \(\widetilde V\) is a bijection. 

Let $z_1,\ldots,z_m$ be the central atoms of $\cM_1$. For
$1\leq\alpha\leq m$, put
\begin{align}\label{def:alpha}
  a_\alpha
  &:=
  \bigvee_{0\ne x\in z_\alpha L_p(\cM_1)}
  \ell(\widetilde V(x)),
  &
  b_\alpha
  &:=
  \bigvee_{0\ne x\in z_\alpha L_p(\cM_1)}
  r(\widetilde V(x)).
\end{align}
Since the central bands $z_\alpha L_p(\cM_1)$ are mutually orthogonal, it follows from Lemma \ref{lem:orth-preserving} that
\begin{align}\label{a_alpha perp a_beta}
  a_\alpha\perp a_\beta,
  \qquad
  b_\alpha\perp b_\beta
  \qquad(\alpha\ne\beta).
\end{align}
Then it follows from the
surjectivity of $\widetilde V$ that
\begin{align}\label{sum alpha=I}
  \sum_{\alpha=1}^m a_\alpha=I_2,
  \qquad
  \sum_{\alpha=1}^m b_\alpha=I_2.
\end{align}
For every $x\in L_p(\cM_1)$, Theorem \ref{thm:orthogonal-addition} yields that
\begin{equation}\label{eq:finite-central-orthogonal-decomposition}
  \widetilde V(x)
  =\sum_{\alpha=1}^m\widetilde V(z_\alpha x).
\end{equation}

Fix $y\in L_p(\cM_2)$ and choose $x$ with $\widetilde V(x)=y$. Let
\[
  y_\alpha:=\widetilde V(z_\alpha x),
\]
then by \eqref{eq:finite-central-orthogonal-decomposition}, we have that
\begin{align}\label{y=sum}
  y=\sum_{\alpha=1}^m y_\alpha,
  \qquad
  y_\alpha\in a_\alpha L_p(\cM_2)b_\alpha,
\end{align}
which implies that
\begin{align}\label{y_gamma}
    y_\gamma=a_\gamma y_\gamma b_\gamma
  \qquad(1\leq\gamma\leq m). 
\end{align}
Then, \eqref{a_alpha perp a_beta}
gives
\begin{align*}
  a_\alpha yb_\alpha
  &=\sum_{\gamma=1}^m
    a_\alpha a_\gamma y_\gamma b_\gamma b_\alpha
    =y_\alpha,\\
  a_\alpha yb_\beta
  &=\sum_{\gamma=1}^m
    a_\alpha a_\gamma y_\gamma b_\gamma b_\beta
    =0
    \qquad(\alpha\ne\beta).
\end{align*}
Since $y$ was arbitrary, it follows from \eqref{eq:finite-Lp-realization} that
\[
  a_\alpha\cM_2b_\beta=\{0\}
  \qquad(\alpha\ne\beta).
\]
We observe that
\[
  a_\alpha
  \stackrel{\eqref{sum alpha=I}
  }{=}a_\alpha\sum_{\beta=1}^m b_\beta
  \stackrel{\eqref{a_alpha perp a_beta}}{=}a_\alpha b_\alpha
  \stackrel{\eqref{sum alpha=I}
  }{=}\sum_{\beta=1}^m a_\beta b_\alpha
  \stackrel{\eqref{a_alpha perp a_beta}}{=}b_\alpha.
\]
Set
\[
  w_\alpha:=a_\alpha=b_\alpha.
\]
Then
\begin{align}\label{w_alpha}
  w_\alpha\cM_2w_\beta=\{0\}
  \qquad(\alpha\ne\beta),
  \qquad
  \sum_{\alpha=1}^m w_\alpha=I_2.
\end{align}
We observe that, for $c\in\cM_2$, we have that
\begin{align*}
  w_\alpha c
  &\stackrel{\eqref{w_alpha}}{=}\sum_{\beta=1}^m w_\alpha c w_\beta
    \stackrel{\eqref{w_alpha}}{=}w_\alpha c w_\alpha\stackrel{\eqref{w_alpha}}{=}\sum_{\beta=1}^m w_\beta c w_\alpha=
  cw_\alpha,
\end{align*}
which implies that
$w_\alpha\in\mathcal Z(\cM_2)$.

\medskip
\noindent\textbf{Claim.} For every $1\leq\alpha\leq m$,
\begin{equation}\label{eq:finite-central-band-image}
  \widetilde V\bigl(z_\alpha L_p(\cM_1)\bigr)
  =w_\alpha L_p(\cM_2).
\end{equation}

\noindent\emph{Proof of the claim.}
First, let $x\in z_\alpha L_p(\cM_1)$. If $x\ne0$, it follows from \eqref{def:alpha} that
\[
  \ell(\widetilde V(x))\leq a_\alpha=w_\alpha,
  \qquad
  r(\widetilde V(x))\leq b_\alpha=w_\alpha.
\]
The same conclusion is immediate when $x=0$. Hence
\[
  \widetilde V(x)
  =w_\alpha\widetilde V(x)w_\alpha
  \in w_\alpha L_p(\cM_2),
\]
which implies that
\[
  \widetilde V\bigl(z_\alpha L_p(\cM_1)\bigr)
  \subseteq w_\alpha L_p(\cM_2).
\]

For the reverse inclusion, let $y\in w_\alpha L_p(\cM_2)$. By surjectivity of $\widetilde V$, we can choose
$x\in L_p(\cM_1)$ such that $\widetilde V(x)=y$, and put
\[
  y_\beta:=\widetilde V(z_\beta x).
\]
Combining \eqref{y=sum} and \eqref{y_gamma}, we have that
\[
  y=\sum_{\beta=1}^m y_\beta,
  \qquad
  y_\beta=w_\beta yw_\beta.
\]
Since $w_\alpha$ is central and $y\in w_\alpha L_p(\cM_2)$,
\[
  y=w_\alpha yw_\alpha.
\]
Consequently, for $\beta\ne\alpha$,
\begin{align}\label{z_beta x=0}
  \widetilde V(z_\beta x)
  =w_\beta w_\alpha y w_\alpha w_\beta
  =0.
\end{align}
By \eqref{def V}, We observe that
$$\|\widetilde V(x)\|_p=\|x\|_p,$$
combining with \eqref{z_beta x=0}, we have that
$$z_\beta x=0,\qquad \beta\ne\alpha,$$ 
which implies that
$$x=z_\alpha x.$$ 
Hence
\[
  y=\widetilde V(x)
  \in\widetilde V\bigl(z_\alpha L_p(\cM_1)\bigr),
\]
which implies that
\[
  w_\alpha L_p(\cM_2)
  \subseteq\widetilde V\bigl(z_\alpha L_p(\cM_1)\bigr).
\]
This proves the claim.

We next show directly that every $w_\alpha$ is a central atom. Suppose, to
the contrary, that
\[
  w_\alpha=c_1+c_2,
\]
where $c_1,c_2$ are nonzero orthogonal central projections of $\cM_2$.
By \eqref{eq:finite-central-band-image}, the restriction
\[
  V_{0,\alpha}:=
  V_0|_{S(z_\alpha L_p(\cM_1))}:
  S(z_\alpha L_p(\cM_1))\longrightarrow
  S(w_\alpha L_p(\cM_2))
\]
is a surjective isometry. Applying the same argument of the central projections $w_\alpha$ to $V_0^{-1}$, we have that
\[
  z_\alpha=r_1+r_2,
\]
where $r_1,r_2$ are nonzero orthogonal central projections of $\cM_1$, contradicting to the atomicity of $z_\alpha$.
Hence, $w_\alpha$ is a central atom of $\cM_2$.

Since $z_\alpha$ and $w_\alpha$ are central atoms, the spaces
$z_\alpha L_p(\cM_1)$ and $w_\alpha L_p(\cM_2)$ are linearly isometric to
finite-dimensional Schatten $p$-classes; see
\cite[Section~1.14 and Remark~6.2.3]{DPS}.

By \eqref{eq:finite-central-band-image}, the restriction
\[
  V_\alpha
  :=V_0|_{S(z_\alpha L_p(\cM_1))}:
  S(z_\alpha L_p(\cM_1))
  \longrightarrow S(w_\alpha L_p(\cM_2))
\]
is therefore a surjective isometry between the unit spheres of two Schatten
$p$-classes. By \cite[Theorem~1.1]{HuangZhu}, it extends to a surjective
real-linear isometry
\[
  T_\alpha:z_\alpha L_p(\cM_1)
  \longrightarrow w_\alpha L_p(\cM_2)
\]
whose restriction to the unit sphere is $V_\alpha$.

Finally, the elements $z_\alpha x$ are mutually orthogonal. Hence, \eqref{eq:finite-central-orthogonal-decomposition} yields that
\[
  \widetilde V(x)
  =\sum_{\alpha=1}^m\widetilde V(z_\alpha x)
  =\sum_{\alpha=1}^mT_\alpha(z_\alpha x).
\]
which implies that $\widetilde V$ is a surjective real-linear isometry.
\end{proof}

\begin{theorem}
Let $\cM_1$ and $\cM_2$ be infinite-dimensional von Neumann algebras, let
\(
  V_0:S(L_p(\cM_1))\longrightarrow S(L_p(\cM_2))
\)
be a surjective isometry, where $1<p\ne 2<\infty$, and let
$\widetilde V$ be its radial extension. Then
\[
  \widetilde V:L_p(\cM_1)\longrightarrow L_p(\cM_2)
\]
is a surjective real-linear isometry.
\end{theorem}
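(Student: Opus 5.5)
The plan follows the strategy outlined in the introduction. First, $\widetilde V$ is a bijection, since the radial extension of $V_0^{-1}$ is its inverse. It is real homogeneous by \eqref{def V} and Remark~\ref{rem:antipodes}. Rescaling Theorem~\ref{thm:orthogonal-addition} shows that it is additive on orthogonal pairs: if $u\perp v$ then $\widetilde V(u+v)=\widetilde V(u)+\widetilde V(v)$ and $\widetilde V(u)\perp \widetilde V(v)$, the latter by Lemma~\ref{lem:orth-preserving}. The goal is to show that $\widetilde V$ is an isometry of the whole space. The Mazur--Ulam theorem \cite{MazurUlam} then gives real linearity, since a surjective isometry fixing $0$ is real linear.

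\textbf{Step 1: isometry on elements with a common orthogonal complement.} Let $x,y\in L_p(\cM_1)$ and suppose there is $z\in S(L_p(\cM_1))$ with $z\perp x$ and $z\perp y$. The idea is to use $z$ to put both points on a common sphere. For $t>0$ large, set $u=x+tz$ and $v=y+tz$. By orthogonal additivity, $\widetilde V(u)=\widetilde V(x)+t\widetilde V(z)$ and $\widetilde V(v)=\widetilde V(y)+t\widetilde V(z)$, so $\widetilde V(u)-\widetilde V(v)=\widetilde V(x)-\widetilde V(y)$ and $u-v=x-y$. If $\|x\|_p=\|y\|_p$, then $\|u\|_p=\|v\|_p$ and the sphere isometry property of $V_0$, rescaled, gives $\|\widetilde V(x)-\widetilde V(y)\|_p=\|x-y\|_p$.

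When $\|x\|_p\ne\|y\|_p$, I would first split $z$ into two mutually orthogonal pieces $z',z''$, both orthogonal to $x$ and $y$. These come from cutting the support projections of $z$, which is possible in infinite dimensions after possibly replacing $z$; if not, enlarge the choice below. I would then add $sz'$ to the smaller of $x,y$ to equalize norms. Orthogonal additivity controls the resulting error term exactly: it contributes $s\widetilde V(z')$ on both sides in a way that is orthogonal to everything else. One then reads off the distance by letting the correction be isolated through the equality case of Lemma~\ref{lem:Clarkson}. A cleaner alternative is to write $x=x_1$ and $y=y_1$ with $x\oplus s z'$ and $y\oplus s' z''$ of equal norm. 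The differences are then $x-y+sz'-s'z''$, and orthogonality gives $\|x-y+sz'-s'z''\|_p^p=\|x-y\|_p^p+s^p+s'^p$ on both sides, provided $\widetilde V(x)-\widetilde V(y)$ is orthogonal to $\widetilde V(z'),\widetilde V(z'')$. That orthogonality follows from Lemma~\ref{lem:orth-preserving}. Cancelling $s^p+s'^p$ yields \eqref{V preserve}.

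\textbf{Step 2: existence of such $z$ up to small error, and approximation.} Work with the left and right support projections $e=\ell(x)\vee\ell(y)$ and $f=r(x)\vee r(y)$. When $\cM_1$ is infinite dimensional, I expect that for any $\varepsilon>0$ one can find projections $e_\varepsilon\le e$ and $f_\varepsilon\le f$ such that $\|x-e_\varepsilon x f_\varepsilon\|_p<\varepsilon$ and $\|y-e_\varepsilon y f_\varepsilon\|_p<\varepsilon$, and such that $I-e_\varepsilon$ and $I-f_\varepsilon$ support a nonzero $z$ with $z\perp e_\varepsilon L_p f_\varepsilon$. This uses the bimodule continuity of Haagerup $L_p$-spaces \cite[Chapter~II]{Terp}, \cite[Section~2]{RX}: $a\mapsto a x b$ is continuous for strongly convergent nets of contractions, applied to spectral projections of $|x^*|+|y^*|$ and $|x|+|y|$ that increase to the supports. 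If $e=I$ and no proper subprojection approximates well, infinite dimensionality still supplies a proper cut. For instance, use a spectral projection of $|x|^p+|y|^p$ of small $\operatorname{Tr}$-mass when the support is diffuse, or an infinite family of orthogonal minimal pieces otherwise. Applying Step 1 to $x_\varepsilon=e_\varepsilon xf_\varepsilon$ and $y_\varepsilon=e_\varepsilon yf_\varepsilon$, and then using continuity of $\widetilde V$, yields \eqref{V preserve} for all $x,y$. The continuity of $\widetilde V$ comes from $V_0$ being an isometry together with continuity of the norm.

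\textbf{Main obstacle.} Step 2 is the hardest, and specifically producing the orthogonal complement in a type~III or non-$\sigma$-finite setting. There is no trace to measure the ``small mass'' being cut, so I would phrase everything through $\operatorname{Tr}(a|x|^p a)$, with $a$ a projection in $\cM_1$, which is normal in $a$. I would first try to show that whenever $\ell(x)\vee\ell(y)=I$, infinite dimensionality yields projections $q_n\uparrow I$ with $q_n\ne I$. Normality of $a\mapsto \operatorname{Tr}(a|x|^pa)$ should then give the approximation.
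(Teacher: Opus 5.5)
Your overall plan matches the paper's: orthogonal additivity of $\widetilde V$, then the distance identity for pairs with a common orthogonal element, then approximation plus continuity of $\widetilde V$, then Mazur--Ulam. Step~1 is correct.

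The splitting of $z$ into $z'\perp z''$ is unnecessary, and not always possible: in $B(H)$ the elements orthogonal to both $x$ and $y$ may form a one-dimensional space. Your first variant, adding $sz$ only to the element of smaller norm, already works with a single $z$. Since $z\perp x-y$ and $V_0(z)\perp\widetilde V(x)-\widetilde V(y)$, the term $s^p$ appears on both sides and cancels. The paper does the same thing symmetrically, placing $(x+c_xz)/R$ and $(y+c_yz)/R$ on the unit sphere and cancelling $|c_x-c_y|^p$. Only $p$-additivity of the norm on orthogonal elements is used here, not the equality case of Lemma~\ref{lem:Clarkson}.

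\textbf{The gap is in Step~2: the cuts you propose do not lie in $\cM_1$.}

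\emph{Why the spectral-projection cuts fail.} For $x\in L_p(\cM_1)$, the operators $|x|$, $|x|+|y|$ and $|x|^p+|y|^p$ are affiliated with the crossed product $\cR_\varphi$, not with $\cM_1$. Because $\tau_\varphi\circ\theta_s=e^{-s}\tau_\varphi$, every nonzero projection of $\cM_1$ (the fixed-point algebra of $\theta$) has infinite $\tau_\varphi$-trace. By contrast, for $x\ne0$ and $\lambda>0$, the projection $\chi_{(\lambda,\infty)}(|x|)$ has finite nonzero trace $\lambda^{-p}\|x\|_p^p$. So these spectral projections are not in $\cM_1$, and compressing by them generally takes you out of $L_p(\cM_1)$.

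\emph{Related problems.} For the same reason there is no ``$\operatorname{Tr}$-mass'' of a projection to make small; $\operatorname{Tr}$ is only a functional on $L_1(\cM_1)$. Your diffuse/atomic case split therefore remains a heuristic. In addition, left and right cuts $e_\varepsilon$, $f_\varepsilon$ chosen independently may satisfy $(I-e_\varepsilon)\cM_1(I-f_\varepsilon)=\{0\}$, in which case no nonzero $z$ exists.

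\emph{The missing step.} It is the one you only announce in your last paragraph, and it needs neither spectral theory nor any dependence on $x$ and $y$.
\begin{enumerate}
\item An infinite-dimensional von Neumann algebra contains a sequence of mutually orthogonal nonzero projections $q_n$.
\item From $\sum_n\|q_n\xi\|^2\le\|\xi\|^2$ you get $q_n\to0$ strongly. Hence $e_n:=I_1-q_n$ satisfies $e_n\ne I_1$ and $e_n\to I_1$ strongly.
\item Bimodule continuity gives $e_nxe_n\to x$ and $e_nye_n\to y$ in $L_p$.
\item Any nonzero $z_n\in q_nL_p(\cM_1)q_n$ is orthogonal to both $e_nxe_n$ and $e_nye_n$.
\item Step~1 together with the continuity of $\widetilde V$ then finishes the argument.
\end{enumerate}
With this step supplied, your proof coincides with the paper's.
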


\begin{proof}
We observe that the radial extension of $V_0^{-1}$ is the inverse of $\widetilde V$. Then we have that \(\widetilde V\) is a bijection.
Moreover, it follows directly from \eqref{def V} that $\widetilde V$ is continuous.

For $x,y\in L_p(\cM_1)$, we  put
\[
  e:=\ell(x)\vee r(x)\vee\ell(y)\vee r(y).
\]
We will prove that \begin{equation}\label{eq:radial-distance-with-common-orthogonal-vector}
  \|\widetilde V(x)-\widetilde V(y)\|_p=\|x-y\|_p.
\end{equation}
The \eqref{eq:radial-distance-with-common-orthogonal-vector} holds immediately when $x=0$ or $y=0$. Then we assume that $x,y\ne 0$.

We first prove \eqref{eq:radial-distance-with-common-orthogonal-vector} if 
\begin{align}\label{e ne I_1}
    e\ne I_1,
\end{align}
By \eqref{e ne I_1}, there exists 
$0\ne z\perp x,y$ with $\|z\|_p=1$.
Choose
\[
  R>\max\{\|x\|_p,\|y\|_p\}
\]
and set
\[
  c_x:=\bigl(R^p-\|x\|_p^p\bigr)^{1/p},
  \qquad
  c_y:=\bigl(R^p-\|y\|_p^p\bigr)^{1/p}.
\]
Then we have that
\[
  \left\|\frac{x+c_xz}{R}\right\|_p
  =
  \left\|\frac{y+c_yz}{R}\right\|_p
  =1.
\]
Applying Theorem \ref{thm:orthogonal-addition}  to the orthogonal
pairs $x,z$ after normalization, we have that
\begin{align}\label{V_0(x+c_xz}
  R V_0\left(\frac{x+c_xz}{R}\right)
  &=R\bigr(\frac{\|x\|}{R}V_0(\frac{x}{\|x\|})+\frac{c_x}{R}V_0(z)\bigl)
  \\&=
  \widetilde V(x)+c_xV_0(z).\nonumber
\end{align}
Similarly, we have that
\begin{align}\label{V_0(y+c_yz}
    R V_0\left(\frac{y+c_yz}{R}\right)
  =\widetilde V(y)+c_yV_0(z).
\end{align}
Since $z\perp x-y$, Lemma \ref{lem:orth-preserving} gives that
\begin{align}\label{z perp x-y}
    V_0(z)\perp \widetilde V(x)-\widetilde V(y).
\end{align}
 Then we have that
\begin{align*}\label{eq:common-z-distance-identity}
\|x-y\|_p^p+\|(c_x-c_y)z\|_p^p&= \|x-y+(c_x-c_y)z\|_p
  \\&=R^p\|\frac{x+c_xz}{R}-\frac{y+c_yz}{R}\|_p^p
  \\&=R^p\|V_0(\frac{x+c_xz}{R})-V_0(\frac{y+c_yz}{R})\|_p^p
  \\&\stackrel{\eqref{V_0(x+c_xz},\eqref{V_0(y+c_yz}}{=}
  \|\widetilde V(x)-\widetilde V(y)+(c_x-c_y)V_0(z)\|_p^p
  \\&\stackrel{\eqref{z perp x-y}}{=}\|\widetilde V(x)-\widetilde V(y)\|_p^p+\|(c_x-c_y)V_0(z)\|_p^p.
\end{align*}
which implies that
$$\|\widetilde V(x)-\widetilde V(y)\|_p=\|x-y\|_p.$$

Then for $e=I_1$. Suppose that $e=I_1$. Every infinite-dimensional von Neumann algebra
contains a sequence $(q_n)$ of mutually orthogonal nonzero projections
\cite[III.1.1.17]{Blackadar}. If $q=\bigvee_{n=1}^{\infty}q_n$, then, for
every vector $\xi$ in the underlying Hilbert space,
\[
  \sum_{n=1}^{\infty}\|q_n\xi\|^2
  =\langle q\xi,\xi\rangle
  \leq\|\xi\|^2.
\]
Hence $\|q_n\xi\|\to0$ for every $\xi$, and therefore 
\[
  q_n\xrightarrow{\mathrm{SOT}}0.
\]
Put
\[
  e_n:=I_1-q_n.
\]
Then $e_n<I_1$ and $e_n\to I_1$ strongly. The bimodule continuity of
Haagerup $L_p$-spaces gives (see, for example,
\cite[Section~2]{RX})
\[
  x_n:=e_nxe_n\longrightarrow x,
  \qquad
  y_n:=e_nye_n\longrightarrow y
  \quad\text{in }L_p\text{-norm}.
\]
For every $n$, choose
\[
  0\ne z_n\in q_nL_p(\cM_1)q_n.
\]
Then $z_n\perp x_n,y_n$, so
\eqref{eq:radial-distance-with-common-orthogonal-vector} yields
\[
  \|\widetilde V(x_n)-\widetilde V(y_n)\|_p
  =\|x_n-y_n\|_p.
\]
Letting $n\to\infty$ and using the continuity of $\widetilde V$, we obtain
$$\|\widetilde V(x)-\widetilde V(y)\|_p=\|x-y\|_p,$$
for arbitrary \(x,y\in L_p(\cM)\).
Thus $\widetilde V$ is an
isometry on all of $L_p(\cM_1)$. Since \(\widetilde V\) is surjective and $\widetilde V(0)=0$,
the Mazur--Ulam theorem \cite{MazurUlam} now implies that $\widetilde V$ is
real linear.
\end{proof}

\bibliographystyle{amsalpha}

\end{document}